 \theoremstyle{plain}
 \newtheorem{thm}{Theorem}
 \newtheorem{lem}[thm]{Lemma}
 \theoremstyle{definition}
 \newtheorem{defn}[thm]{Definition}
 \theoremstyle{remark}
 \newtheorem{rem}[thm]{Remark}
\newcounter{nootje}
\newcommand\noot[1]
 \newcommand{\abs}[1]{\left|#1\right|}
 \newcommand{\set}[1]{\left\{#1\right\}}
 \newcommand{\p}[1]{\left(#1\right)}
 \newcommand{\N}{\mathbb{N}}
 \newcommand{\Z}{\mathbb{Z}}
 \newcommand{\ZZ}{\mathbb{Z}}
\begin{document}
\title{Primitive prime divisors in zero orbits of polynomials}
\author{Kevin Doerksen}
\address{Department of Mathematics,
         Simon Fraser University,
         Burnaby, BC,
         Canada V5A 1S6}
\email{kdoerkse@gmail.com}
\author{Anna Haensch}
\address{Department of Mathematics and Computer Science, 
		Wesleyan University, 
		Middletown CT, 06459, 
		USA}
\email{ahaensch@wesleyan.edu}

\date{September 20, 2010}
\subjclass{Primary 11B37; Secondary 37P05}

\begin{abstract}
Let $(b_n) = (b_1, b_2, \dots)$ be a sequence of integers.  A primitive prime divisor of a term $b_k$ is a prime which divides $b_k$ but does not divide any of the previous terms of the sequence.  A zero orbit of a polynomial $\varphi(z)$ is a sequence of integers $(c_n)$ where the $n$-{th} term is the $n$-th iterate of $\varphi$ at 0.  We consider primitive prime divisors of zero orbits of polynomials.  In this note, we show that for $c, d$ in $\ZZ$, where $d \geq 2$ and $c \neq \pm 1$, every iterate in the zero orbit of $\varphi(z) = z^d + c$ contains a primitive prime whenever zero has an infinite orbit.  If $c = \pm 1$, then every iterate after the first contains a primitive prime.
\end{abstract}
\maketitle

\section{Introduction}
A \emph{dynamical system} is a pair $(\varphi,S)$ where $S$ is a set and $\varphi$ is a map from $S$ to itself.  Given such a pair, the \emph{orbit} of an element $\alpha\in S$ under $\varphi$ is the set
\[
\{\varphi(\alpha),\varphi^2(\alpha),...,\varphi^n(\alpha),...\}
\]
where
\[
\varphi^n(x)=\underbrace{\varphi\circ\varphi\circ...\circ\varphi}_{n\text{ times}}(x).
\]
Such an element $\alpha$ can classified according to the size of the orbit.  If the orbit contains only finitely many values, then $\alpha$ is a \emph{preperiodic point}.  If the orbit contains infinitely many values, then $\alpha$ is a \emph{wandering point}.  If we restrict to the case where $S = \ZZ$ and $\varphi\in\ZZ[z]$, the orbit of a wandering point $\alpha$, will yield an infinite sequence of integers.  Some very natural questions about prime factorization and divisibility in these sequences arise.  In particular, one can ask which iterates in the orbit contain prime divisors not dividing any previous term.
\begin{defn}
Let $(b_n) = (b_1, b_2, \dots)$ be a sequence of integers.  We say that the term $b_n$ contains a \emph{primitive prime divisor} if there exists a prime $p$ such that $p \mid b_n$, but $p \nmid b_i$ for $i < n$.
\end{defn}   

Questions about terms containing primitive prime divisors have been asked for a number of different recurrence sequences.  Classical results by Bang \cite{AB} (for $b=1$) and Zsigmondy \cite{KZ} showed that for any $a, b \in \N$, every term in the sequence $a^n - b^n$ has a primitive prime divisor past the sixth term.
The question of primitive prime divisors in second-order linear recurrence sequences was completely solved by Bilu, Hanrot, and Voutier in \cite{BHV}.  

Recent papers have addressed the question of primitive prime divisors in nonlinear recurrence sequences.  Elliptic divisibility sequences, for example, were considered by Silverman in \cite{JS88}, and later by Everest, Mclaren, and Ward in \cite{EMW} and Yabuta in \cite{Y}.  In our paper, we consider recurrence sequences generated by the orbit of wandering points of non-linear polynomials.  This question was first addressed by Rice \cite{Rice}.  

\begin{thm}(Rice 2007) 
Let $\varphi(z)\in\ZZ[z]$ be a monic polynomial of degree $d\geq 2$.  Suppose that $(b_n) = \varphi^n(0)$ has infinite orbit under iteration of $\varphi$ such that $(b_n)$ is a rigid divisibility sequence.  Then all but finitely many terms of the sequence $(\alpha, \varphi(\alpha), \varphi^2(\alpha), \dots)$ contain a primitive prime divisor.  
\end{thm}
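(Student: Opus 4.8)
The plan is to pit the doubly-exponential growth of the zero orbit against the comparatively meagre growth that would be forced on a term failing to have a primitive prime divisor. Write $b_n=\varphi^n(0)$. Because $\varphi$ is monic of degree $d\ge 2$, once $|b_n|$ is large one has $b_{n+1}=\varphi(b_n)=b_n^d\bigl(1+o(1)\bigr)$; and since the orbit is infinite it is unbounded in $\ZZ$, so once it leaves a suitable bounded set it escapes (with $|b_{n+1}|>|b_n|$ thereafter), giving $|b_n|\to\infty$. Setting $a_n=\log|b_n|$, the recursion $a_{n+1}=d\,a_n+o(1)$ shows that $a_n/d^n$ converges to a limit $\gamma$, and in fact that $a_n=\gamma d^n+O(1)$ with error bounded uniformly in $n$; since $a_n\to\infty$ we must have $\gamma>0$. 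This is the only place monicity and the infinite-orbit hypothesis are used, and it is the crux: the rate $d^n$ must strictly dominate the $d^{n/2}$-type bound appearing below.

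Next I would unpack the rigid divisibility hypothesis into the form actually needed. For a prime $p$ dividing some term of the sequence, let $n_p$ be its rank of apparition, the least index with $p\mid b_{n_p}$. The two defining properties of a rigid divisibility sequence give: (i) $p\mid b_n$ if and only if $n_p\mid n$; and (ii) $\ord_p(b_n)=\ord_p(b_{n_p})$ whenever $p\mid b_n$. (Also $b_n\neq 0$ for every $n\ge 1$, since $b_n=0$ would make $0$ periodic, contradicting the infinite orbit.) Property (ii) pins the valuation at $p$ to its first occurrence; it is precisely what a generic divisibility sequence lacks, and without it the upper bound below would collapse.

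Now suppose, toward a contradiction, that $b_n$ has no primitive prime divisor for some (large) $n$. Then every prime $p\mid b_n$ has $n_p<n$; since $n_p\mid n$ it is a proper divisor, so $n_p\le\lfloor n/2\rfloor$. Grouping the primes dividing $b_n$ by their rank of apparition and using (ii),
\[
|b_n|\;=\;\prod_{\substack{k\mid n\\ k<n}}\ \prod_{\substack{p\mid b_n\\ n_p=k}}p^{\ord_p(b_k)} .
\]
For each fixed $k$ the inner product ranges over a subset of the primes dividing $b_k$, with the exponents they carry in $b_k$, so it divides $|b_k|$; hence $|b_n|$ divides $\prod_{k\mid n,\;k<n}|b_k|$, which in turn divides $\prod_{k=1}^{\lfloor n/2\rfloor}|b_k|$ since every proper divisor of $n$ is at most $\lfloor n/2\rfloor$. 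Taking logarithms and inserting $\log|b_k|\le\gamma d^k+C$ from the first step,
\[
\gamma d^n+O(1)\;\le\;\sum_{k=1}^{\lfloor n/2\rfloor}\bigl(\gamma d^k+C\bigr)\;\le\;\frac{\gamma d}{d-1}\,d^{n/2}+C\,\frac n2 .
\]
The left-hand side grows like $d^n$ and the right-hand side like $d^{n/2}$, so the inequality fails for all sufficiently large $n$. Hence $b_n$ has a primitive prime divisor for all but finitely many $n$, which is the assertion.

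The one genuinely delicate point is the first paragraph: verifying that the error in $\log|b_n|=\gamma d^n+O(1)$ is bounded uniformly over all $n$ (including the finitely many initial indices, before the orbit has escaped) and that $\gamma$ is strictly positive. Granting that growth estimate, everything else is bookkeeping with the rigid divisibility structure together with the crude fact that $\sum_{k\le n/2}d^k=O(d^{n/2})$ is negligible against $d^n$.
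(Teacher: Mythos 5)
Your argument is correct and is essentially the approach of this paper (which only cites Rice's theorem but proves its own Theorem~\ref{T:Main} by the same two steps): rigid divisibility forces every non-primitive prime of $b_n$ to first appear at a proper divisor of $n$, so the non-primitive part divides $\prod_{k\mid n,\,k\neq n}\abs{b_k}$ (this is Lemma~\ref{T:NP}), and the doubly exponential growth of the orbit makes that product too small. The only difference is quantitative: you use the asymptotic $\log\abs{b_n}=\gamma d^n+O(1)$ and the crude bound $n_p\le n/2$, which gives ``all but finitely many'' with an in-principle computable threshold, whereas the paper's version for $z^d+c$ establishes the sharper inequality $\prod_{k=1}^{n-1}\abs{b_k}<\abs{b_n}$ for all $n\ge 3$ and hence pins down the exceptional terms exactly.
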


See Section~\ref{S:RDS} for a definition of rigid divisibility sequences. 
Rice also showed that if zero is a preperiodic point of a monic polynomial of degree $\geq 2$, then the orbit of any wandering point has finitely many terms which contain no primitive prime divisor.  Silverman and Ingram \cite{PI} later generalized this result to arbitrary rational maps over number fields.  Faber and Granville \cite{FaberGranville2011} also considered rational maps, $\phi$, over number fields, but they looked at primitive prime divisors in the sequence generated by $\p{\phi^{n + \Delta}(\alpha) - \phi^n(\alpha)}$ for a wandering point $\alpha$ and a fixed integer $\Delta \geq 1$.

Silverman and Ingram use Roth's theorem to prove their result, and therefore their proof does not give a means to find an effective upper bound on the terms without primitive prime divisors.  Rice also remarks that though his bounds are effectively computable, he does not compute them.  Silverman \cite{JS} proposed that it would be of interest to compute explicit upper bounds on $n$ for the terms which contain no primitive prime divisor, when the polynomial $\varphi(z)$ and $\alpha$ are fixed.  In this paper, we answer this question for a certain class of polynomials. We prove

\begin{thm}\label{T:Main}
Let $\varphi \in \ZZ[z]$ be the polynomial $\varphi(z) = z^d + c$, where $c, d \in \ZZ$ and $d \geq 2$.  Suppose that zero is a wandering point of $\varphi$ and write $b_n = \varphi^n(0)$.  Then    
\begin{enumerate}
	\item If $c = \pm 1$, then $b_n$ contains a primitive prime for all $n \geq 2$.
	\item For all other $c \in \Z$, $b_n$ contains a primitive prime for all $n \geq 1$.
\end{enumerate}
\end{thm}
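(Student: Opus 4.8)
The plan is to convert the statement into a growth inequality by exploiting a divisibility structure. The recursion $b_{n+1}=b_n^d+c$ gives at once $c\mid b_n$ for every $n\ge 1$, so I would write $b_n=c\,a_n$; then $a_1=1$, $a_{n+1}=c^{d-1}a_n^d+1$, and consequently $\gcd(a_n,c)=1$ for every $n$. Since the primes dividing $c$ are exactly those dividing $b_1$, one checks easily that for $n\ge 2$ a prime is a primitive prime divisor of $b_n$ precisely when it divides $a_n$ but no $a_m$ with $m<n$; and for $n=1$ (needed only in case (2)) the assertion is just that $c$ has a prime factor, which holds because $c\ne 0,\pm 1$ (here $c\ne 0$ is forced by the hypothesis that $0$ is wandering, since $z^d$ fixes $0$). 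Thus it suffices to show that $a_n$ has a primitive prime divisor for all $n\ge 2$.

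Next I would prove that $(a_n)$ is a rigid divisibility sequence in the sense of Section~\ref{S:RDS}. Because $d\ge 2$, the recursion gives $a_{n+1}\equiv 1\pmod{a_n^2}$, and induction on $k$ yields $a_{n+k}\equiv a_k\pmod{a_n^2}$ for all $n,k\ge 1$ (with $a_0=0$). Reducing modulo $a_n$ gives the Euclidean-type relation $\gcd(a_m,a_n)=|a_{\gcd(m,n)}|$, while the square modulus gives, for each prime $p$ with rank of apparition $r_p$, that $p\mid a_m$ iff $r_p\mid m$ and $v_p(a_m)=v_p(a_{r_p})$ for every such $m$. The usual consequence follows: if $a_n$ has no primitive prime divisor then every prime $p\mid a_n$ has $r_p$ a proper divisor of $n$, hence $r_p\mid n/q$ for some prime $q\mid n$, hence $p^{v_p(a_n)}=p^{v_p(a_{n/q})}\mid a_{n/q}$; combining over all $p$ (and using that $|a_m|$ is nondecreasing for $m\ge 2$) gives
\[
|a_n| \;\le\; \prod_{\substack{q\mid n\\ q\text{ prime}}} |a_{n/q}| \;\le\; |a_{\lfloor n/2\rfloor}|^{\omega(n)}.
\]

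It remains to contradict this inequality. The first step is to record what the wandering hypothesis forces: $a_n\notin\{-1,0,1\}$ for $n\ge 2$, hence $|a_n|\ge 2$. Indeed $a_n=0$ means $b_n=0$; $a_n=1$ (and $a_n=-1$ when $d$ is even) makes $(a_n)$ eventually periodic; and $a_n=-1$ with $d$ odd forces $c^{d-1}a_{n-1}^d=-2$, which has no integer solution when $d\ge 3$. In each case $0$ would be preperiodic, a contradiction. When $n$ is prime the displayed bound reads $|a_n|\le|a_1|^{\omega(n)}=1$, already contradicted, so assume $n$ is composite. From $a_{n+1}=c^{d-1}a_n^d+1$ and $|a_n|\ge 2$ one gets, with $L_n=\log|a_n|$,
\[
dL_n-\log 2 \;\le\; L_{n+1} \;\le\; dL_n+(d-1)\log|c|+\log 2 ,
\]
and unrolling from $a_2$ (or from $a_3$ in the single degenerate case $c=1$, $d=2$) produces bounds $C_1 d^n\max(1,\log|c|)\le L_n\le C_2 d^n\max(1,\log|c|)$ with absolute constants $C_1,C_2>0$; the crucial point is that the \emph{lower} bound also carries the factor $\max(1,\log|c|)$, because $|a_2|\ge|c|^{d-1}-1$. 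Feeding these into $L_n\le\omega(n)L_{\lfloor n/2\rfloor}$ and using $\omega(n)\le\log_2 n$ gives $C_1 d^{n/2}\le C_2\log_2 n$, which fails for all $n$ beyond an explicit threshold $N_0$ depending only on $d$; the finitely many composite $n$ with $4\le n<N_0$ are handled by checking the growth inequality directly.

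The main obstacle is getting a bound that is uniform in $c$. A careless version of the growth estimate leaves a stray $\log|c|$ in the threshold, so that $N_0\to\infty$ with $|c|$ and one is left with infinitely many cases; the resolution is that $L_n$ and $L_{\lfloor n/2\rfloor}$ both scale linearly in $\log|c|$, so the decisive inequality is genuinely free of $c$. The other delicate point is the small-$n$ analysis — ruling out $a_n=\pm 1$ and verifying the growth inequality for the few composite $n<N_0$ — which must be argued from the wandering hypothesis and the parity of $d$ rather than from asymptotics.
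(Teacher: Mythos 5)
Your overall architecture is sound and genuinely different from the paper's in its endgame, so a comparison is worthwhile. The normalization $b_n=c\,a_n$ with $a_{n+1}=c^{d-1}a_n^d+1$ is a clean device the paper does not use: the paper works with $(b_n)$ directly, proves the rigid-divisibility property for it (Lemma~\ref{T:RDS}, via $b_{n+t}\equiv b_t \pmod{p^{e+1}}$), bounds the non-primitive part by $N_m=\prod_{e\mid m,\,e\neq m}P_e\leq\prod_{k=1}^{m-1}\abs{b_k}$ (Lemma~\ref{T:NP}), and then kills it with a single elementary induction: $\abs{b_m}>\abs{b_{m-1}}\p{\abs{b_{m-1}}-1}$ for $m\geq 3$ implies $\prod_{k=1}^{m-1}\abs{b_k}<\abs{b_m}$, hence $P_m>1$. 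Your rank-of-apparition bound $\abs{a_n}\leq\prod_{q\mid n}\abs{a_{n/q}}$ is sharper than the paper's $\prod_{k<n}\abs{b_k}$, but you then pay for that sharpness with the $d^n$-versus-$\log_2 n$ threshold analysis, which the paper avoids entirely.

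The one step you must actually fill in is the last one: ``the finitely many composite $n$ with $4\leq n<N_0$ are handled by checking the growth inequality directly'' is not a finite check, since for each such $n$ the inequality $\abs{a_n}\leq\prod_{q\mid n}\abs{a_{n/q}}$ must be refuted for \emph{every} admissible $c$; moreover your unrolled constants are delicate for small $\abs{c}$ and small $d$ (and the degenerate starting cases for $d=2$ are $c=1$ \emph{and} $c=-3$, both giving $\abs{a_2}=2$, not just $c=1$). The uniform-in-$c$ argument that rescues this step is precisely the paper's product induction transplanted to $(a_n)$: from $\abs{a_{k+1}}\geq\abs{a_k}^2-1$ and $\abs{a_k}\geq 2$ for $k\geq 2$ one gets $\abs{a_n}>\prod_{k=2}^{n-1}\abs{a_k}$ for all $n\geq 3$, and since the indices $n/q$ are distinct and lie in $[2,n-1]$ for composite $n$, this already dominates $\prod_{q\mid n}\abs{a_{n/q}}$ for every $n$ at once --- which renders the asymptotic threshold machinery unnecessary. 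So the proposal is correct in outline, but once you supply the missing uniform argument you will find it subsumes the hardest-looking part of your plan.
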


We prove Theorem~\ref{T:Main} in two parts.  We begin by showing that for the sequence $(b_n)$, defined in the statement of the theorem, there is an upper bound on the size of the product of all prime divisors of a term $b_n$ which are not primitive prime divisors.  We then show that the sequence grows too fast for any one term to not contain a primitive prime divisor (other than possibly the first term).

\medskip
\noindent\emph{Acknowledgements.}
The authors would like to thank Joe Silverman for originally drawing their attention to the problem.  They would also like to thank Michelle Manes and Rafe Jones for the helpful comments and suggestions.  Thanks also goes to the organizers of the NSF-funded Arizona Winter School, at which the initial research for this paper took place.

\section{Rigid Divisibility Sequences}\label{S:RDS}

In order to prove Theorem~\ref{T:Main}, we make use of a special type of divisibility sequence, with terminology taken from Jones \cite{RJ} and Rice \cite{Rice}.  For $\alpha \in \ZZ$, let $v_p(\alpha)$ denote the valuation at $p$ of $\alpha$.  A sequence $(b_n)$ of integers is said to be a \emph{rigid divisiblity sequence} if for every prime $p$ the following two properties hold:
\begin{enumerate}
\item\label{L:RDS1} If $v_p(b_n) > 0$ then $v_p(b_{nk}) = v_p(b_n)$ for all $k\geq 1$, and
\item\label{L:RDS2} If $v_p(b_n) > 0$ and $v_p(b_m) > 0$ then $v_p(b_n) = v_p(b_m) = v_p(b_{\gcd(m,n)})$.
\end{enumerate} 

\begin{lem}[Rice {\cite{Rice}}]\label{T:RDS}
Let $\varphi \in \ZZ[z]$ be the polynomial $\varphi(z) = z^d + c$, where $c, d \in \ZZ$ and $d \geq 2$.  Let zero be a wandering point of $\varphi$ and write $b_n = \varphi^{n}(0)$.  Then $(b_n)$ is a rigid divisibility sequence.
\end{lem}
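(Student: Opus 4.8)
The plan is to extract two congruences from the recursion and then read off properties~(\ref{L:RDS1}) and~(\ref{L:RDS2}) almost mechanically. Throughout, extend the notation by setting $b_0 = \varphi^0(0) = 0$, so that $b_{n+m} = \varphi^m(\varphi^n(0)) = \varphi^m(b_n)$ for all $n,m \geq 0$. Since $\varphi^m \in \ZZ[z]$ and $a-b$ divides $\varphi^m(a) - \varphi^m(b)$, taking $a = b_n$, $b = 0$ gives the basic relation
\[
b_{n+m} \equiv b_m \pmod{b_n} \qquad (n, m \geq 0),
\]
which already shows $(b_n)$ is a divisibility sequence. Note that the hypothesis that zero is a wandering point guarantees $b_n \neq 0$ for all $n \geq 1$, so every valuation $v_p(b_n)$ considered below is finite; this is where that hypothesis enters.

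For property~(\ref{L:RDS1}) the key refinement is: if $p$ is prime, $n \geq 1$, and $e := v_p(b_n) \geq 1$, then $b_{n+m} \equiv b_m \pmod{p^{\,e+1}}$ for every $m \geq 1$. Indeed $\varphi(b_n) = b_n^{\,d} + c$ with $v_p(b_n^{\,d}) = de \geq 2e \geq e+1$ because $d \geq 2$ and $e \geq 1$; hence $\varphi(b_n) \equiv c = b_1 \pmod{p^{\,e+1}}$, and applying $\varphi^{m-1} \in \ZZ[z]$ to both sides yields $b_{n+m} = \varphi^{m-1}(\varphi(b_n)) \equiv \varphi^{m-1}(b_1) = b_m \pmod{p^{\,e+1}}$. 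Now induct on $k$: for $k \geq 2$, taking $m = (k-1)n \geq 1$ gives $b_{kn} \equiv b_{(k-1)n} \pmod{p^{\,e+1}}$, so by induction $b_{kn} \equiv b_n \pmod{p^{\,e+1}}$; since $v_p(b_n) = e < e+1$ this forces $v_p(b_{kn}) = e$, which is exactly property~(\ref{L:RDS1}).

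For property~(\ref{L:RDS2}), suppose $v_p(b_n) > 0$ and $v_p(b_m) > 0$. Reducing the basic relation modulo $p$ shows that the sequence $(b_i \bmod p)_{i \geq 0}$ is purely periodic with period dividing $n$, and, symmetrically, with period dividing $m$; a short B\'ezout argument then shows it has period dividing $g := \gcd(m,n)$, so that $b_g \equiv b_0 = 0 \pmod p$, i.e.\ $p \mid b_g$. Since $g \mid n$ and $g \mid m$, applying property~(\ref{L:RDS1}) with $b_g$ in place of $b_n$ gives $v_p(b_n) = v_p(b_g) = v_p(b_m)$, which is property~(\ref{L:RDS2}).

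The only genuine content is the extra factor of $p$ extracted in the refined congruence — everything rests on $d \geq 2$ forcing $v_p(b_n^{\,d}) \geq v_p(b_n) + 1$. I expect the periodicity/gcd step in property~(\ref{L:RDS2}) to be the part needing the most care to state cleanly, since one must use the congruence $b_{n+m}\equiv b_m \pmod{b_n}$ for all $m \geq 0$ (in particular $m=0$, which is what pins the period to index $0$ where $b_0 = 0$).
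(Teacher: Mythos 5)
Your proof is correct and takes essentially the same route as the paper's: the identical key refinement $b_{n+t}\equiv b_t \pmod{p^{e+1}}$ extracted from $v_p(b_n^d)\geq e+1$ (using $d\geq 2$), an induction along multiples of $n$ for the first property, and a B\'ezout/periodicity argument for the second. Your way of finishing the second property---reducing mod $p$ to get $p\mid b_{\gcd(m,n)}$ and then invoking the already-proved first property---is a minor variant of the paper's direct chain of congruences modulo $p^{e+1}$, and if anything states the B\'ezout step more carefully than the paper does.
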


\begin{proof}
Let $p$ be a prime and suppose $v_p(b_n) = e > 0$ for some $n, e \in \N$.  Then $b_n = p^em$ for some $m$ where $p \nmid m$.  Then 
$$
b_{n+1} = p^{ed} m^d + c = p^{2e}\p{p^{d-2}m^d} + c \equiv c \pmod{p^{e+1}},
$$
with the last congruence true because $2e \geq e+1$.  But $b_1 = c$ so 
$$
b_{n+1} \equiv b_1 \pmod{p^{e+1}}.
$$
By induction on $t$, we have $b_{n+t} \equiv b_t \pmod{p^{e+1}}$, and so in general for $k \geq 1$,
$$
b_{kn + r} \equiv b_r \pmod{p^{e+1}},
$$
and in particular, for $r = 0$, we get $v_p(b_{kn}) = v_p(b_{n})$.  

Now suppose $m, n \in \N$ such that $v_p(b_m) > 0$ and $v_p(b_n) > 0$.  Without loss of generality, suppose $m < n$ and $m \!\nmid\! n$ (the case where $m \vert n$ has already been covered).  Let $s, t \in \N$ such that $t \geq 1$ and $sm + tn = \gcd(m,n)$.  Then
$$
b_{\gcd(m,n)} = b_{sm + tn} \equiv b_{tn} \equiv b_n \pmod{p^{e+1}},
$$
therefore $v_p\!\p{b_{\gcd(m,n)}} = v_p\!\p{b_n}$, and since $m$ is a positive multiple of $\gcd(m,n)$, we also conclude $v_p\!\p{b_{\gcd(m,n)}} = v_p\!\p{b_m}$.
\end{proof}

\begin{rem}
Rice actually proves a more general result to Lemma~\ref{T:RDS}.  In Propositions~3.1 and 3.2 from \cite{Rice}, he shows that for any polynomial $\varphi$ of degree $d \geq 2$ that has a wandering orbit at zero, then the sequence $b_n$ as defined in Lemma~\ref{T:RDS} is a rigid divisibility sequence if and only if the coefficient of the linear term of $\varphi$ is zero.
\end{rem}

Suppose $(b_n)$ is a rigid divisibility sequence.  For every $n$, we can write 
$$
b_n = p_1^{e_{1}}p_2^{e_{2}} \dots p_k^{e_{k}} q_{1}^{f_{1}} \dots q_{\ell}^{f_{\ell}}
$$
where $p_i$ are the primitive primes of $b_n$ and $q_j$ are the primes of $b_n$ which are not primitive.  Let 
\begin{align*}
P_n &= p_1^{e_{1}} \dots p_{k}^{e_{k}} = \text{the \emph{primitive part} of }b_n \text{ and}\\
N_n &= q_{1}^{f_{1}} \dots q_{\ell}^{f_{\ell}} = \text{the \emph{non-primitive part} of }b_n.
\end{align*}

\begin{lem}\label{T:NP}
Let $(b_n)$ be a rigid divisibility sequence and let $P_n$ and $N_n$ be as above.  Then 
$$
N_n = \prod_{d\mid n, d \neq n} P_d.
$$
\end{lem}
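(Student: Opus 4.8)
The plan is to verify the claimed equality one prime at a time, i.e.\ to show that $v_p(N_n) = \sum_{d \mid n,\ d \neq n} v_p(P_d)$ for every prime $p$. The organizing notion is the \emph{entry index} of $p$: if $p$ divides some term of $(b_n)$, let $m_p$ be the least index with $p \mid b_{m_p}$ (if $p$ divides no term, then every quantity in sight has $p$-valuation $0$ and there is nothing to prove).

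First I would extract two structural facts about $m_p$ from the rigid divisibility axioms. Using property~\ref{L:RDS2}, if $p \mid b_n$ for some $n$, then $p \mid b_{\gcd(n,m_p)}$; since $\gcd(n,m_p) \leq m_p$, minimality of $m_p$ forces $\gcd(n,m_p) = m_p$, hence $m_p \mid n$. Thus the set of indices at which $p$ appears is exactly the set of multiples of $m_p$. Applying property~\ref{L:RDS1} to $b_{m_p}$ then gives $v_p(b_{m_p k}) = v_p(b_{m_p})$ for all $k \geq 1$, so $v_p(b_n) = v_p(b_{m_p})$ whenever $p \mid b_n$.

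Next I would locate where $p$ contributes to the primitive and non-primitive parts. By definition $p$ is a primitive prime of $b_d$ precisely when $d = m_p$; hence $v_p(P_d) = v_p(b_{m_p})$ if $d = m_p$ and $v_p(P_d) = 0$ otherwise. In particular the factors $P_d$ are pairwise coprime, and $\sum_{d \mid n,\ d \neq n} v_p(P_d)$ equals $v_p(b_{m_p})$ when $m_p$ is a proper divisor of $n$ and $0$ otherwise. On the other side, $p \mid N_n$ iff $p \mid b_n$ and $p \mid b_i$ for some $i < n$; since the indices where $p$ appears are exactly the multiples of $m_p$, this holds iff $m_p \mid n$ and $m_p < n$, i.e.\ iff $m_p$ is a proper divisor of $n$, in which case $v_p(N_n) = v_p(b_n) = v_p(b_{m_p})$. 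Comparing the two computations gives the $p$-adic identity in all cases. This also records that every prime occurring in $\prod_{d \mid n,\ d \neq n} P_d$ genuinely divides $b_n$ with the correct multiplicity, so the stated equality is an identity of positive integers, not merely of prime supports; the empty-product case $n = 1$ is automatically consistent since $b_1$ has no earlier terms.

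I do not expect a real obstacle: the argument is pure bookkeeping with the two valuation axioms. The one step deserving care is the deduction $m_p \mid n$, which genuinely uses property~\ref{L:RDS2} and not just property~\ref{L:RDS1} — without knowing that the indices of appearance of $p$ form a single divisibility chain from $m_p$, one could not rule out the non-primitive part absorbing contributions unaccounted for by the proper divisors of $n$.
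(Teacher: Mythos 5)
Your proof is correct and takes essentially the same route as the paper's: both arguments hinge on using Property~\ref{L:RDS2} to show that a non-primitive prime of $b_n$ first appears at an index dividing $n$ (your $m_p$, with $v_p(b_n) = v_p(b_{m_p})$), and Property~\ref{L:RDS1} to carry the valuation of a primitive prime of $b_d$, $d \mid n$, up to $b_n$. Your prime-by-prime valuation bookkeeping via the entry index is just a more explicit packaging of the paper's two-way divisibility argument, with the useful added observation that the $P_d$ are pairwise coprime.
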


\begin{proof}
Let $p$ be a prime divisor in the non-primitive part of $b_n$.  Then there exists some positive integer $d < n$ such that $p$ is a primitive prime divisor of $b_d$.  By Property~\ref{L:RDS2} of rigid divisibility sequences,  $v_{p}(b_n) = v_{p}(b_d) = v_{p}(b_{\gcd(d, n)})$.  Since $p$ is a primitive prime divisor of $b_d$, we must have $\gcd(d, n) \geq d$ and so $d \mid n$.  Therefore $N_n$ divides $\prod_{d\mid n, d\neq n} P_d$.

Now suppose $d \mid n$ and suppose $q$ is a primitive prime divisor of $b_d$.  Then by Property~\ref{L:RDS1}, $v_q(b_d) = v_q(b_n)$.  Therefore the product $\prod_{d\mid n, d\neq n} P_d$ divides $N_n$, completing the proof.  
\end{proof}

Armed with these two results, we are now able to proceed with the main theorem of this paper.

\section{Proof of Main Result}
We begin this section with a useful lemma.
\begin{lem}\label{Lem:Increasing}
Let $\varphi \in \ZZ[z]$ be the polynomial $\varphi(z) = z^d + c$, where $c, d \in \ZZ$ and $d \geq 2$.  Let $a \in \ZZ$, and for nonnegative $n \in \ZZ$, define the sequence $(b_{a, n})$ by
$$
b_{a, 0} = a \quad \text{and} \quad b_{a, n+1} = \varphi(b_{a, n})
$$
and let $B_{a,n} = \abs{b_{a,n}}$.  If $\abs{a} \geq \abs{c}$ and $\abs{a} > 2$ then $(B_{a, n})$ is an increasing sequence.
\end{lem}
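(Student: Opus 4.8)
The plan is to prove, by induction on $n$, that the two conditions $\abs{b_{a,n}} \geq \abs{c}$ and $\abs{b_{a,n}} > 2$ propagate along the iteration and that each step strictly increases the absolute value. The point of carrying the first two conditions as invariants (rather than just tracking monotonicity) is that the one-step estimate below needs both of them as hypotheses. Note that since $a, c \in \ZZ$, every term $b_{a,n}$ is an integer, so ``$\abs{b_{a,n}} > 2$'' is equivalent to ``$\abs{b_{a,n}} \geq 3$''; I will use this.

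The key ingredient is a one-step bound. Suppose $x \in \ZZ$ with $\abs{x} \geq \abs{c}$ and $\abs{x} \geq 3$. Then by the triangle inequality,
\[
\abs{\varphi(x)} = \abs{x^d + c} \geq \abs{x}^d - \abs{c} \geq \abs{x}^d - \abs{x} = \abs{x}\p{\abs{x}^{d-1} - 1}.
\]
Since $\abs{x} \geq 3$ and $d \geq 2$, we have $\abs{x}^{d-1} - 1 \geq 3 - 1 = 2$, so $\abs{\varphi(x)} \geq 2\abs{x} > \abs{x}$. In particular $\abs{\varphi(x)} > \abs{x} \geq \abs{c}$ and $\abs{\varphi(x)} > \abs{x} \geq 3 > 2$, so $\varphi(x)$ again satisfies both of the hypotheses we started with.

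Now run the induction with $x = b_{a,n}$. The base case $n = 0$ holds because $\abs{b_{a,0}} = \abs{a} \geq \abs{c}$ and $\abs{a} > 2$ by assumption. For the inductive step, if $\abs{b_{a,n}} \geq \abs{c}$ and $\abs{b_{a,n}} > 2$, the estimate gives both $B_{a,n+1} = \abs{\varphi(b_{a,n})} > \abs{b_{a,n}} = B_{a,n}$ and the two invariants for $b_{a,n+1}$, which closes the induction and shows $(B_{a,n})$ is strictly increasing. I do not expect a real obstacle here: the only subtlety is remembering to maintain the auxiliary invariants $\abs{b_{a,n}} \geq \abs{c}$, $\abs{b_{a,n}} > 2$ throughout, and observing that the bound $\abs{x}^{d-1} - 1 \geq 2$ relies on integrality ($\abs{x} \geq 3$) together with $d \geq 2$.
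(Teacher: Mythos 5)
Your proof is correct and takes essentially the same approach as the paper: an induction on $n$ whose one-step estimate is the reverse triangle inequality $\abs{x^d+c}\geq \abs{x}^d-\abs{c}$ combined with the hypotheses $\abs{x}\geq\abs{c}$ and $\abs{x}\geq 3$. The only cosmetic difference is that you carry the two invariants explicitly and bound $\abs{x}^d-\abs{x}=\abs{x}(\abs{x}^{d-1}-1)\geq 2\abs{x}$ by factoring, whereas the paper phrases the induction hypothesis as ``increasing so far'' and uses $\abs{x}^d>2\abs{x}$; your version is, if anything, slightly more careful about why the hypotheses propagate.
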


\begin{proof}
We prove this by induction. For the base case,
$$
B_{a, 1} = \abs{\varphi(a)} = \abs{a^d + c} > \abs{2a} - \abs{c} \geq \abs{a}.
$$
Now suppose $(B_{a, n})$ is increasing on $n \leq N$.  Then
$$
B_{a, N+1} = \abs{\varphi(b_{a, N})} = \abs{(b_{a, N})^d + c} > \abs{2b_{a, N}} - \abs{c} > B_{a, N} + \abs{a} - \abs{c} \geq B_{a, N},
$$
completing the proof.
\end{proof}

The statement of Theorem~\ref{T:Main} requires zero to be a wandering point of $\varphi$.  In the next lemma, we characterize all polynomials over $\ZZ$ of the form $z^d + c$ for which zero is a preperiodic point.  Rice proves a more general result by giving a complete classification of monic polynomials for which the orbit of zero is finite (see \cite[Proposition~2.1]{Rice}).  Nevertheless, the special case where $\varphi(z) = z^d + c$ is a relevant lemma with a straightforward proof.  We therefore provide a full proof.

\begin{lem}\label{Lem:Wandering}
Let $\varphi \in \ZZ[z]$ be the polynomial $\varphi(z) = z^d + c$, where $c, d \in \ZZ$ and $d \geq 2$. Then either\begin{enumerate}
	\item Zero is a wandering point and the sequence $(B_n)$, defined by $B_n = \abs{\varphi^n(0)}$, is an increasing sequence, or
	\item Zero is a preperiodic point and exactly one of the following is true
	\begin{enumerate}
		\item $c = 0$,
		\item $c = -1$ and $d$ is even, or
		\item $c = -2$ and $d = 2$.
	\end{enumerate}
 \end{enumerate}
\end{lem}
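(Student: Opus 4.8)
The plan is to split into two cases according to whether the orbit of zero is finite or infinite, and to show that finiteness forces $c$ into the short list $\{0, -1 \text{ (}d\text{ even)}, -2 \text{ (}d=2\text{)}\}$, while conversely each of these values genuinely yields a preperiodic point; the increasing-sequence assertion in case (1) then follows immediately from Lemma~\ref{Lem:Increasing} applied with $a = b_1 = c$. So first I would dispose of the "easy direction": for $c = 0$ the orbit of zero is $\{0\}$; for $c = -1$ with $d$ even, $\varphi(0) = -1$ and $\varphi(-1) = (-1)^d - 1 = 0$, so zero is periodic of period $2$; and for $c = -2$, $d = 2$, one checks $\varphi(0) = -2$, $\varphi(-2) = 2$, $\varphi(2) = 2$, so the orbit is eventually fixed. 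In each case zero is preperiodic, and one notes these three conditions are mutually exclusive, establishing the "exactly one" clause.

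The substance is the forward direction: if zero is \emph{not} wandering, then $c$ must be one of these values. Here I would use Lemma~\ref{Lem:Increasing}. Write $b_n = \varphi^n(0)$, so $b_1 = c$. The key observation is that the hypotheses of Lemma~\ref{Lem:Increasing} with $a = c$ are $|c| \geq |c|$ (automatic) and $|c| > 2$, i.e. $|c| \geq 3$. Thus if $|c| \geq 3$, the sequence $(|b_n|)_{n \geq 1}$ is strictly increasing, hence unbounded, so zero is a wandering point. Contrapositively, if zero is preperiodic then $|c| \leq 2$, leaving only the finitely many cases $c \in \{0, \pm 1, \pm 2\}$, which I would check one at a time. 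For $c = 0$: preperiodic (orbit $\{0\}$). For $c = 1$: the orbit is $0, 1, 2, 2^d + 1, \dots$; since $2^d + 1 \geq 5 > 2$ and $|2^d+1| \geq |c'|$ trivially... more cleanly, once a term exceeds $2$ in absolute value and dominates $|c|=1$, Lemma~\ref{Lem:Increasing} applies to the shifted sequence and forces escape — concretely, $b_3 = 2^d + 1$ satisfies $|b_3| \geq 5 > 2$ and $|b_3| \geq |c|$, so $(|b_n|)_{n \geq 3}$ is increasing and zero wanders. For $c = -1$ with $d$ odd: $b_1 = -1$, $b_2 = (-1)^d - 1 = -2$, $b_3 = (-2)^d - 1 = -2^d - 1$, and $|b_3| \geq 5 > 2$ with $|b_3| \geq |c|$, so again the orbit escapes. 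For $c = 2$: $b_1 = 2$, $b_2 = 2^d + 2 \geq 6$, and $|b_2| > 2$, $|b_2| \geq |c|$, so the orbit escapes when $d \geq 2$ — wait, need $d\ge 2$ which is assumed, so zero wanders. For $c = -2$ with $d \geq 3$: $b_1 = -2$, $b_2 = (-2)^d - 2$, and $|b_2| = |(-2)^d + 2| \geq 2^d - 2 \geq 6 > 2$ with $|b_2| \geq |c| = 2$, so the orbit escapes; only $d = 2$ survives, and there (as computed above) zero is preperiodic.

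The main obstacle — really a bookkeeping matter rather than a conceptual one — is handling the small-$|c|$ cases where Lemma~\ref{Lem:Increasing} does not apply at the first term because $|c| \leq 2$: one must iterate $\varphi$ a few times, verify that some term $b_k$ satisfies both $|b_k| \geq |c|$ (easy, since the iterates grow) and $|b_k| > 2$, and then invoke Lemma~\ref{Lem:Increasing} with $a = b_k$ to conclude the tail $(|b_n|)_{n \geq k}$ is increasing, hence zero is wandering — unless the chain of iterates collapses, which happens precisely in the three listed cases. I would organize this as a finite check over $c \in \{0, \pm 1, \pm 2\}$ with a sub-split on the parity of $d$ (and on $d = 2$ versus $d \geq 3$) where needed, being careful that for $c = -1$ the even/odd dichotomy is exactly what separates the preperiodic case from the wandering one, and that for $c = -2$ the threshold $d = 2$ versus $d \geq 3$ plays the analogous role. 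Once all five values are dispatched, combining with the easy direction gives the full statement, and the increasing-sequence claim in part (1) is just Lemma~\ref{Lem:Increasing} with $a = c$ when $|c| \geq 3$, supplemented by the explicit tail argument when $|c| \in \{1, 2\}$ and zero still wanders.
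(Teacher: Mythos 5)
Your proposal is correct and follows essentially the same route as the paper's proof: reduce to $|c|\leq 2$ via Lemma~\ref{Lem:Increasing}, then dispatch $c\in\{0,\pm 1,\pm 2\}$ by computing a few iterates and re-invoking Lemma~\ref{Lem:Increasing} on a later term once it exceeds $2$ in absolute value (the paper handles $c>0$ by a single direct induction rather than checking $c=1,2$ separately, but this is only an organizational difference). The explicit initial iterates you compute also cover the increasing-sequence claim in part (1), so nothing is missing.
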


\begin{proof}
Note that $\varphi(0) = c$, so if $c \notin \set{0, \pm 1, \pm 2}$, then $(B_n)$ is an increasing sequence by Lemma~\ref{Lem:Increasing}, and so zero must be a wandering point.

For $c > 0$, a simple induction shows that $(\varphi^n(0))$ is an increasing sequence, and so zero is a wandering point.

If $c = 0$ then $\varphi(0) = 0$ and zero is a preperiodic point.  

Now suppose $c = -1$. If $d$ is even, then $\varphi(0) = -1$ and $\varphi(-1) = 0$ and therefore zero is a preperiodic point.  If $d$ is odd then $\varphi(0) = -1$, $\varphi(-1) = -2$, and $\varphi(-2) = (-2)^d - 1$.  Since $\abs{\varphi(-2)} > 2$, we can apply Lemma~\ref{Lem:Increasing} to show that all subsequent iterates grow in absolute value.

Finally, suppose $c = -2$.  If $d = 2$ then $\varphi(0) = -2$, $\varphi(-2) = 2$, and $\varphi(2) = 2$.  If $d >2$ then $\varphi(0) = -2$ and $\varphi(-2) = (-2)^d - 2$.  But 
$$
\abs{-2^d - 2} \geq 2^d - 2 \geq 2^3 - 2 = 6.
$$
We can therefore apply Lemma~\ref{Lem:Increasing} to conclude that zero is a wandering point.
\end{proof}

We now are ready to prove Theorem~\ref{T:Main}.

\begin{proof}[Proof of Theorem~\ref{T:Main}]
Note first that if $c = 0$, then zero would not be a wandering point, so we must have $c \ne 0$.  Also, $b_1 = \varphi(0) =  c$, so $b_1$ will have a primitive prime if and only if $c \neq \pm 1$.  For $b_2$, note that 
\begin{align*}
b_2 &= \varphi(b_1) = c^d + c\\
	&= c (c^{d-1} + 1),
\end{align*}
and since $b_1 = c$, we see that $b_2$ will contain a primitive prime divisor, except when $c = 0$ or when $c = -1$ and $d$ is even.  In both cases,  by Lemma~\ref{Lem:Wandering}, zero would not be a wandering point.

Now let $m \in \N$ with $m \geq 3$.   We will prove that $b_m$ contains a primitive prime.  Let  $\abs{\cdot}$ denote the Euclidean absolute value.  Then  
\begin{align*}
\abs{b_m} &= \abs{\p{b_{m-1}}^d + c}\\ 
	&\geq \abs{\p{b_{m-1}}^d} - \abs{c}\\
	&\geq \abs{b_{m-1}}^2 - \abs{b_1}	&& \text{because }b_1 = \varphi(0) = c \text{ and } d\geq 2\\
	&> \abs{b_{m-1}}^2 - \abs{b_{m-1}}. && \text{because } (b_n) \text{ is increasing and } m \geq 3.
\end{align*}
We can factor the last line to obtain
\begin{equation}\label{E:Factored}
\abs{b_m} > \abs{b_{m-1}} \cdot \p{\abs{b_{m-1}} - 1}.
\end{equation}

To complete the proof, we first need to show that for all $m \geq 3$, 
$$
\prod_{k=1}^{m-1} \abs{b_k} < \abs{b_m}.
$$
We prove this claim by induction.  The base case is trivially true.  Now assume that $\prod_{k=1}^{m-2} \abs{b_k} < \abs{b_{m-1}}$.  In particular, this implies 
\begin{equation}\label{E:Product}
\prod_{k=1}^{m-2} \abs{b_k} \leq \abs{b_{m-1}} - 1.
\end{equation}
Combining  \eqref{E:Product} with \eqref{E:Factored}, 
\begin{align*}
\prod_{k=1}^{m - 1} \abs{b_k}
	&= \abs{b_{m - 1}}\cdot \prod_{k=1}^{m-2} \abs{b_{k}}\\
	&\leq \abs{b_{m - 1}}  \cdot \p{\abs{b_{m-1}} - 1}\\
	&< \abs{b_{m}}.
\end{align*}

Finally, by Lemma~\ref{T:RDS},  we know that $(b_n)$ is a rigid divisibility sequence.  For all $m \in \N$, let $P_m$ and $N_m$ denote the primitive part and the non-primitive part of $b_m$ respectively.  Then $\abs{b_m} = P_{m}N_{m}$ and by Lemma~\ref{T:NP}
$$
N_m = \prod_{d\vert m, d \neq m} P_d
	\leq \prod_{k=1}^{m-1} P_k
	\leq \prod_{k=1}^{m-1} \abs{b_k}
	<\abs{b_m}
$$
Therefore $P_m > 1$ and $b_m$ contains a primitive prime.
\end{proof}

\end{document}